\documentclass[11pt]{article}

\usepackage[a4paper,vmargin={25mm,25mm},hmargin={25mm,25mm}]{geometry}
\usepackage{amsmath,amssymb,amsthm,mathtools}
\usepackage{hyperref}
\usepackage{microtype}

\hypersetup{
  colorlinks=true,
  linkcolor=blue,
  citecolor=blue,
  urlcolor=blue
}

\numberwithin{equation}{section}

\newtheorem{theorem}{Theorem}[section]
\newtheorem{lemma}[theorem]{Lemma}
\newtheorem{proposition}[theorem]{Proposition}
\newtheorem{corollary}[theorem]{Corollary}

\newcommand{\R}{\mathbb R}
\newcommand{\eps}{\varepsilon}
\newcommand{\diam}{\operatorname{diam}}
\newcommand{\supp}{\operatorname{supp}}
\newcommand{\cN}{\mathcal N}
\newcommand{\cA}{\mathcal A}
\newcommand{\cF}{\mathcal F}
\newcommand{\cC}{\mathcal C}
\newcommand{\1}{\mathbf 1}

\title{Sharp Antipode--Neighbor Bounds in Every Dimension}
\author{Samuel Korsky}
\date{August 17, 2026}

\begin{document}
\maketitle

\begin{abstract}
\noindent
Let $P\subset\R^d$ be finite with $\diam(P)\le1$.  For $0<\eps<1$, let
$\cN_\eps(P)$ denote the number of ordered pairs at distance at most $\eps$,
including the diagonal, and let $\cA_\eps(P)$ denote the number of ordered
pairs at distance at least $1-\eps$.  We prove the sharp bound
\[
  \cN_\eps(P)\gtrsim_d \eps^{(d-1)/2}\cA_\eps(P)
\]
conjectured by Steinerberger (2025).  More generally, the same inequality holds
for finite positive Borel measures.  The main input is a uniform $L^2$
estimate, at cap thickness comparable to lattice spacing, for a
supporting-direction incidence operator.  In gnomonic coordinates, this
operator is controlled by an incidence estimate for small Fenchel--Young
gaps.
\end{abstract}

\section{Introduction}

Throughout, $\sigma$ denotes the usual, unnormalized surface measure on
$S^{d-1}$, and $\mathbb B_2^d$ denotes the closed Euclidean unit ball.  We
write $X\lesssim_{\alpha}Y$ if $X\le C_{\alpha}Y$ for a constant depending
only on the displayed parameters $\alpha$; the notations $\gtrsim_{\alpha}$
and $\asymp_{\alpha}$ are defined analogously.

Let $P\subset\R^d$ be finite with $\diam(P)\le1$.  For $0<\eps<1$, define
the ordered counts
\[
  \cN_\eps(P)
  :=\#\{(x,y)\in P^2:|x-y|\le\eps\},
  \qquad
  \cA_\eps(P)
  :=\#\{(x,y)\in P^2:|x-y|\ge1-\eps\}.
\]
Thus every diagonal pair $(x,x)$ is included in $\cN_\eps(P)$.  This
convention is essential for a cardinality-free statement valid for all
finite sets.

More generally, if $\mu$ is a finite positive Borel measure on $\R^d$, set
\begin{align*}
  \cN_\eps(\mu)
  &:=(\mu\times\mu)\{(x,y):|x-y|\le\eps\},\\
  \cA_\eps(\mu)
  &:=(\mu\times\mu)\{(x,y):|x-y|\ge1-\eps\}.
\end{align*}
For the counting measure $\mu_P:=\sum_{x\in P}\delta_x$, these are exactly
the preceding ordered counts.

Steinerberger asked how small the ratio
$\cN_\eps(P)/\cA_\eps(P)$ can be and proposed $\eps^{1/2}$ as the sharp
planar order \cite{Steinerberger2025}.  In earlier work, the author obtained
this order up to a logarithm in the plane and proved a higher-dimensional
estimate with exponent $3(d-1)/4$ \cite{Korsky2026}.  We prove the optimal
exponent in every fixed dimension.

\begin{theorem}[Weighted antipode--neighbor estimate]
\label{thm:weighted}
For every integer $d\ge1$, there is a constant $c_d>0$ such that, for every
finite positive Borel measure $\mu$ with compact support satisfying
$\diam(\supp\mu)\le1$ and every $0<\eps<1$,
\begin{equation}
  \cN_\eps(\mu)
  \ge
  c_d\eps^{(d-1)/2}\cA_\eps(\mu).
  \label{eq:weighted-main-bound}
\end{equation}
\end{theorem}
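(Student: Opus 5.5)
We reduce \eqref{eq:weighted-main-bound} to an $L^2$ incidence estimate for a supporting-direction operator, following the outline in the abstract. Set $\delta:=\eps^{1/2}$. We may assume $\cA_\eps(\mu)>0$ and normalize so that $\supp\mu\subset\mathbb B_2^d$.

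\textbf{Step 1: Smoothed neighbor count.} Cover $\R^d$ by a lattice of cubes $Q$ of side $\asymp\eps/\sqrt d$. Cauchy--Schwarz gives
\[
  \cN_\eps(\mu)\ge\sum_Q\mu(Q)^2 .
\]
So it suffices to show $\sum_Q\mu(Q)^2\gtrsim_d\eps^{(d-1)/2}\cA_\eps(\mu)$.

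\textbf{Step 2: Geometry of antipodal pairs.} Suppose $|x-y|\ge1-\eps$ and $\diam(\supp\mu)\le1$. Let $\theta$ be the unit direction of $y-x$. Then every $z\in\supp\mu$ satisfies $\langle z-x,\theta\rangle\ge -\eps$, since otherwise $|z-y|>1$. Likewise $\langle z-y,-\theta\rangle\ge-\eps$.

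Let $h(\theta)=\max_{z\in K}\langle z,\theta\rangle$ be the support function of $K=\overline{\mathrm{conv}}\,\supp\mu$. Then $x$ lies in the cap $C(-\theta,\eps)=\{z\in K:\langle z,-\theta\rangle\ge h(-\theta)-\eps\}$, and $y$ lies in $C(\theta,\eps)$.

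Thus every antipodal pair is an incidence between two $\eps$-thin caps in opposite supporting directions. Moreover, the direction $\theta$ is determined up to error $O(\eps)$ on the unit sphere, and perturbing $\theta$ within angle $\delta$ preserves the cap membership at thickness $O(\eps)$.

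Averaging over $\theta$ in a $\delta$-cap of directions then gives
\[
  \cA_\eps(\mu)\lesssim_d \delta^{-(d-1)}\int_{S^{d-1}}\mu(C(\theta,C\eps))\,\mu(C(-\theta,C\eps))\,d\sigma(\theta)
  \le \delta^{-(d-1)}\int_{S^{d-1}}\mu(C(\theta,C\eps))^2\,d\sigma(\theta),
\]
using AM--GM and the symmetry $\theta\mapsto-\theta$.

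\textbf{Step 3: Uniform $L^2$ estimate.} It remains to prove
\[
  \int_{S^{d-1}}\mu(C(\theta,\eps))^2\,d\sigma(\theta)\lesssim_d \delta^{d-1}\sum_Q\mu(Q)^2 .
\]
Combined with Step 2, this gives $\cA_\eps\lesssim\sum_Q\mu(Q)^2$ with the loss $\delta^{-(d-1)}\cdot\delta^{d-1}$ cancelling. This looks too strong, so I recheck the scaling: the target is $\cA\lesssim\eps^{-(d-1)/2}\sum_Q\mu(Q)^2=\delta^{-(d-1)}\sum_Q\mu(Q)^2$. The needed $L^2$ bound is therefore
\[
  \int_{S^{d-1}}\mu(C(\theta,\eps))^2\,d\sigma\lesssim\sum_Q\mu(Q)^2 .
\]
This is the "uniform $L^2$ estimate at cap thickness comparable to lattice spacing."

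Expanding the square, the left side equals $\sum_{Q,Q'}\mu(Q)\mu(Q')\,\sigma\{\theta: Q,Q'\text{ both meet }C(\theta,\eps)\}$. By Schur's test, it suffices to show that for each fixed $Q$,
\[
  \sum_{Q'}\sigma\{\theta:Q,Q'\subset C(\theta,O(\eps))\}\lesssim_d1 .
\]

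\textbf{Step 4: Gnomonic coordinates and the main obstacle.} Fix a hemisphere of directions and write $\theta\propto(\xi,-1)$. Represent the relevant part of $\partial K$ as the graph of a convex function $f$. Then $z=(u,f(u))$ lies in $C(\theta,\eps)$ iff the Fenchel--Young gap satisfies $f(u)+f^*(\xi)-\langle u,\xi\rangle\lesssim\eps$.

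The Schur bound becomes an incidence estimate. For fixed $u$, the set of $\xi$ with small gap is an $\eps$-sublevel set of the convex function $g_u(\xi)=f^*(\xi)-\langle u,\xi\rangle+f(u)$. Summing over $\eps$-separated $u'$ in the same sublevel geometry is dual to counting the $\eps$-lattice points of a convex sublevel set of $f$ at height $\eps$, measured against its dual area. The Blaschke--Santal\'o / Mahler-type duality $|S_f(\eps)|\cdot|S_{f^*}(\eps)|\lesssim\eps^{d}$ for sections of a convex function, applied with John's ellipsoid normalization, should give the bound $\lesssim_d1$ uniformly in $f$.

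The main obstacle is that lattice-point counts of thin convex sets do not obey volume bounds: a sublevel set can be needle-like, with many cubes but tiny volume. I would handle this by replacing volume with the $\eps$-neighborhood volume, which lattice counts do control. The added cost is then absorbed because the dual sublevel set must also have thickness at least $\eps$ in every direction, via the Lipschitz bound $|\nabla f^*|\le1$ coming from $\diam\le1$. Verifying this uniformly, including near the hemisphere boundary where gnomonic coordinates degenerate (handled by covering $S^{d-1}$ by $O_d(1)$ charts), is the step I expect to be hardest.
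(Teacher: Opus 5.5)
Your Steps 1 and 2 are correct and coincide with the paper's reduction. Same-cell pairs are $\eps$-neighbors. For a near-diameter pair, every direction within angle $\sqrt\eps$ of $(y-x)/|y-x|$ is an $O(\eps)$-supporting direction for $y$ and the opposite of one for $x$ (Lemma~\ref{lem:common-directions}), which gives $\cA_\eps(\mu)\lesssim_d\eps^{-(d-1)/2}\int_{S^{d-1}}\mu(C(\theta,C\eps))^2\,d\sigma(\theta)$.

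\textbf{The gap is in Step 3.} The constant-weight Schur condition $\sum_{Q'}\sigma\{\theta:Q,Q'\subset C(\theta,O(\eps))\}\lesssim_d1$ is false.
\begin{itemize}
\item Take $d=2$, $\mu$ uniform on $[-1/2,1/2]\times\{0\}$, and $Q$ the cell containing the endpoint $(1/2,0)$. For $\theta=(\sin\phi,\cos\phi)$ with $0<\phi<\pi/2$, that endpoint is a supporting point. The cap $C(\theta,C\eps)$ contains the part of the segment within distance about $C\eps/\sin\phi$ of it, so it meets about $\min(\eps^{-1},\phi^{-1})$ cells. The row sum for $Q$ is therefore $\gtrsim\int_0^{\pi/2}\min(\eps^{-1},\phi^{-1})\,d\phi\asymp\log(1/\eps)$.
\item The same happens at the rim of a flat $(d-1)$-disk in every dimension. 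There each dyadic range of tilts $\phi\sim\eps/t$, $\eps\lesssim t\lesssim1$, contributes $\asymp1$.
\end{itemize}
The operator norm is still $O(1)$ in these examples, because the large row sums occur only for the few cells near the edge of a flat face. A Schur test with constant weights cannot see this, and yields at best $\cN_\eps\gtrsim\eps^{(d-1)/2}\cA_\eps/\log(1/\eps)$.

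So Step 4 is aimed at a false target. It is also only a heuristic (``should give''). Moreover, it parametrizes only boundary points $z=(u,f(u))$, whereas cells inside $K$ have support deficit equal to a Fenchel--Young gap plus a vertical slack.

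\textbf{What the paper uses instead.}
\begin{itemize}
\item \emph{Vertical slack.} In a gnomonic chart, the homogeneous deficit of $z=(x,t)$ splits exactly as $\cF_f(p,x)+(\beta(x)-t)$, with both terms nonnegative. Hence each horizontal lattice point $x$ carries only $O_d(1+\tau/\delta)$ relevant heights, and these are aggregated by Cauchy--Schwarz. This reduces everything to the kernel $\1\{\cF_f(p,x)\le\eta\}$ on $[-1,1]^m\times\delta\mathbb Z^m$.
\item \emph{Induction on dimension.} That kernel is handled by induction on $m$, using the exact factorization $\cF_f((s,q),(a,b))=\cF_{\phi_q}(s,a)+\cF_{g_a}(q,b)$.
\item \emph{Rectangle bound.} In one dimension the kernel is biconvex. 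The cross-difference identity $\cF_f(s,a_-)+\cF_f(t,a_+)-\cF_f(s,a_+)-\cF_f(t,a_-)=(s-t)(a_+-a_-)$ eliminates $f$ and gives $|E|\,\#X\le4(1+\eta/\delta)$ uniformly over convex $f$. This replaces your volume/Mahler comparison and is insensitive to needle-like sublevel sets.
\item \emph{Weighted Schur test.} The Schur test is run with the non-constant weights $w(s,a)=(n(s)/d_a)^{1/4}$, where $n(s)$ is the row count and $d_a$ the column length. Combined with layer-cake tail bounds derived from the rectangle condition, this is exactly what removes the logarithm exhibited above.
\end{itemize}
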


\begin{corollary}[Sharp antipode--neighbor estimate]
\label{cor:main}
For every integer $d\ge1$, there is a constant $c_d>0$ such that, for every
finite $P\subset\R^d$ with $\diam(P)\le1$ and every $0<\eps<1$,
\begin{equation}
  \cN_\eps(P)
  \ge
  c_d\eps^{(d-1)/2}\cA_\eps(P).
  \label{eq:main-bound}
\end{equation}
\end{corollary}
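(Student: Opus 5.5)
The corollary is the special case $\mu=\mu_P:=\sum_{x\in P}\delta_x$ of Theorem~\ref{thm:weighted}, so the plan is simply to check that $\mu_P$ satisfies the hypotheses of the theorem and that the two sides of \eqref{eq:weighted-main-bound} are exactly the two sides of \eqref{eq:main-bound}.

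First, the hypotheses. Since $P$ is finite, $\mu_P$ is a finite positive Borel measure, with total mass $\#P$. Its support is $\supp\mu_P=P$. This holds because each point of $P$ carries mass $1$, while any point outside the finite closed set $P$ has a neighborhood missing $P$. In particular the support is compact, and $\diam(\supp\mu_P)=\diam(P)\le1$.

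Second, the two sides. For any Borel set $E\subset\R^d\times\R^d$ we have $(\mu_P\times\mu_P)(E)=\#(E\cap P^2)$, since $\mu_P\times\mu_P=\sum_{(x,y)\in P^2}\delta_{(x,y)}$. Both sets
\[
\{(x,y):|x-y|\le\eps\}\quad\text{and}\quad\{(x,y):|x-y|\ge1-\eps\}
\]
are closed, hence Borel. Therefore $\cN_\eps(\mu_P)=\cN_\eps(P)$ and $\cA_\eps(\mu_P)=\cA_\eps(P)$, and in both counts the diagonal pairs are handled identically, as the introduction already notes.

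Applying Theorem~\ref{thm:weighted} with the same $c_d$ now gives \eqref{eq:main-bound}. There is no real obstacle: all the difficulty sits in Theorem~\ref{thm:weighted}, and the only points to verify are the identification of the support and the closedness of the two distance sets.
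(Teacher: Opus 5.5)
Your proposal is correct and is exactly the paper's proof: the paper simply applies Theorem~\ref{thm:weighted} to the counting measure $\mu_P=\sum_{x\in P}\delta_x$. Your checks — that $\supp\mu_P=P$ is compact with diameter at most $1$, and that the two closed distance sets give $\cN_\eps(\mu_P)=\cN_\eps(P)$ and $\cA_\eps(\mu_P)=\cA_\eps(P)$ — supply the details the paper leaves implicit.
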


\subsection*{Sharpness}

For $d\ge2$, the exponent is best possible as $\eps\downarrow0$.  Let
$\mu$ be normalized surface measure on the sphere of radius $1/2$ in
$\R^d$.  If $\theta$ is the central angle between $x$ and $y$, then
\[
  |x-y|=\sin(\theta/2).
\]
Thus $|x-y|\le\eps$ means that $y$ lies in a spherical cap about $x$ of
angular radius $2\arcsin\eps\asymp\eps$, whereas
$|x-y|\ge1-\eps$ means that $y$ lies in a cap about $-x$ of angular radius
$2\arccos(1-\eps)\asymp\sqrt\eps$.  Standard spherical-cap estimates give
\begin{equation}
  \cN_\eps(\mu)\asymp_d\eps^{d-1},
  \qquad
  \cA_\eps(\mu)\asymp_d\eps^{(d-1)/2}.
  \label{eq:spherical-sharpness}
\end{equation}
Consequently, the power $(d-1)/2$ cannot be replaced by any smaller power.

The same scaling is attained by finite point sets.  Fix $\eps$ and let
$X_1,X_2,\ldots$ be independent samples with law $\mu$.  Almost surely the
points are distinct.  Moreover, the two distance-level sets
\[
  \{(x,y):|x-y|=\eps\},
  \qquad
  \{(x,y):|x-y|=1-\eps\}
\]
have $(\mu\times\mu)$-measure zero.  The strong law for $U$-statistics,
together with the vanishing diagonal contribution $1/n$, therefore gives,
for $P_n:=\{X_1,\ldots,X_n\}$,
\[
  \frac{\cN_\eps(P_n)}{n^2}\longrightarrow\cN_\eps(\mu),
  \qquad
  \frac{\cA_\eps(P_n)}{n^2}\longrightarrow\cA_\eps(\mu)
\]
almost surely.  Choosing $n=n(\eps)$ sufficiently large yields finite point
sets with the same asymptotic scaling as in
\eqref{eq:spherical-sharpness}.

For $d=1$, let $\nu$ be the normalized counting measure on
$\{-1/2,1/2\}$.  Then
\[
  \cN_\eps(\nu)=\cA_\eps(\nu)=\frac12
  \qquad(0<\eps<1),
\]
so the exponent $0$ is sharp in dimension one as well.  Steinerberger gives
further planar and higher-dimensional examples with the same scaling
\cite{Steinerberger2025}.

\subsection*{Idea of the proof}

The exponent comes from a quadratic angular loss.  Suppose that $z,w$ lie
in a compact convex set $B$, put $r=|z-w|$, and let
$v=(z-w)/r$.  If $\diam(B)\le\Delta$ and
$\theta=\angle(u,v)$, then
\[
  [h_B(u)-z\cdot u]+[h_B(-u)+w\cdot u]
  \le \Delta-r\cos\theta
  =(\Delta-r)+r(1-\cos\theta).
\]
For a near-diameter pair, $\Delta-r\lesssim\eps$, and
$1-\cos\theta\lesssim\theta^2$.  Hence every direction with
$\theta\lesssim\sqrt\eps$ is simultaneously an approximate supporting
direction for $z$ and the opposite of one for $w$.  A geodesic ball of
radius $\sqrt\eps$ in $S^{d-1}$ has measure
\[
  \asymp_d(\sqrt\eps)^{d-1}=\eps^{(d-1)/2}.
\]

To exploit this observation, discretize $\supp\mu$ by a grid of mesh
$\delta\asymp_d\eps$.  Let $m_z$ be the mass in the cell centered at $z$,
and let $B$ be the convex hull of the relevant centers.  For $z\in B$ and
$\tau>0$, set
\[
  \Omega_B(z,\tau)
  :=\{u\in S^{d-1}:h_B(u)-z\cdot u\le\tau\}
\]
and define the supporting-cap operator
\[
  \cC_{B,\tau}c(u)
  :=\sum_z c_z\1_{\Omega_B(z,\tau)}(u).
\]
The central analytic estimate, proved in Proposition
\ref{prop:cap-bound}, is the quantitative bound
\[
  \|\cC_{B,\tau}c\|_{L^2(S^{d-1})}
  \lesssim_d
  \left(1+\frac{\tau}{\delta}\right)^{d/2}
  \|c\|_{\ell^2}.
\]
Thus the operator norm is uniform over all compact convex sets whenever
cap thickness is comparable to lattice spacing.

With $F=\cC_{B,2\eps}m$, the proof reduces to the chain
\begin{align*}
  \cA_\eps(\mu)
  &\lesssim_d
    \eps^{-(d-1)/2}
    \int_{S^{d-1}}F(u)F(-u)\,d\sigma(u)\\
  &\le
    \eps^{-(d-1)/2}\|F\|_2^2\\
  &\lesssim_d
    \eps^{-(d-1)/2}\sum_zm_z^2\\
  &\le
    \eps^{-(d-1)/2}\cN_\eps(\mu).
\end{align*}

It remains to explain the cap estimate.  Away from a measure-zero set,
$S^{d-1}$ is covered by $2d$ open gnomonic charts.  In one chart, write a
direction as $(p,1)/\sqrt{1+|p|^2}$ with
$p\in(-1,1)^{d-1}$, set $Q=[-1,1]^{d-1}$, and write a lattice point
as $(x,t)$.  If $f(p)=H_B(p,1)$ and
$\beta(x)=\min_{q\in Q}(f(q)-q\cdot x)=-f_Q^*(x)$, then the homogeneous support
deficit has the exact decomposition
\[
  f(p)-p\cdot x-t
  =\underbrace{f(p)+f_Q^*(x)-p\cdot x}_{\text{Fenchel--Young gap}}
   +\underbrace{\beta(x)-t}_{\text{vertical slack}}.
\]
The lattice assumption controls the horizontal variables in the
Fenchel--Young estimate, while $\tau/\delta$ bounds the number of possible
vertical heights above each horizontal lattice point.

In one dimension, the relation
$\cF_f(s,a)\le\eta$ is a thickening of the monotone graph $\partial f$.
Its horizontal and vertical sections are intervals, and a cross-difference
identity gives a rectangle bound.  A weighted Schur argument then yields
the one-dimensional $L^2$ estimate.  Finally, the exact factorization
\[
  \cF_f((s,q),(a,b))
  =\cF_{\phi_q}(s,a)+\cF_{g_a}(q,b)
\]
allows the estimate to be iterated over the coordinates.  The logical
dependence is
\[
  \text{Lemma \ref{lem:biconvex}}
  \Longrightarrow
  \text{Proposition \ref{prop:one-dimensional}}
  \Longrightarrow
  \text{Theorem \ref{thm:fenchel-bound}}
  \Longrightarrow
  \text{Proposition \ref{prop:cap-bound}}
  \Longrightarrow
  \text{Theorem \ref{thm:weighted}}.
\]

\subsection*{Related work}

For all sufficiently small $\eps>0$, Steinerberger proved the planar lower
bound
\[
  \cN_\eps(P)
  \gtrsim
  \frac{\eps^{3/4}}{\log(1/\eps)^{1/4}}\cdot\cA_\eps(P)
\]
and identified $\eps^{1/2}$ as the natural candidate for the optimal order
\cite{Steinerberger2025}.  The author subsequently proved, again for all
sufficiently small $\eps$,
\[
  \cN_\eps(P)
  \gtrsim
  \frac{\eps^{1/2}}{\log(1/\eps)^{1/2}}\cdot\cA_\eps(P)
  \quad(d=2),
  \qquad
  \cN_\eps(P)
  \gtrsim_d
  \eps^{3(d-1)/4}\cA_\eps(P)
  \quad(d\ge3)
\]
\cite{Korsky2026}.  Those proofs reduce the problem to estimates for a
weighted graph on small boundary cells.  The present proof instead uses a
uniform $L^2$ operator bound for supporting directions and applies that
bound directly to the weighted pair count.

Dam\'asdi and Ploscaru independently obtained the sharp planar estimate 
and also proved a sharp two-parameter
planar theorem \cite{DamasdiPloscaru2026}. 
Their proof passes through a planar box graph and a degree-tail sparsity
estimate.  The present argument instead applies in every fixed dimension
and uses a uniform supporting-cap operator bound in place of planar
sparsity.

The exact diameter problem has a separate classical theory.  The diameter
graph of a finite set $P$ joins two distinct points precisely when their
distance equals $\diam(P)$.  A planar $n$-point set has at most $n$ edges
in this graph \cite{Erdos1946}; the sharp three-dimensional bound is
$2n-2$ \cite{Grunbaum1956,Heppes1956,Straszewicz1957}; and quadratic
examples exist in dimensions $d\ge4$ \cite{Swanepoel2009}.  These exact,
unweighted results do not directly control the approximate weighted cell
incidence considered here.

The convex-analytic component of the proof is based on the classical
Fenchel--Young inequality.  After supporting directions are parametrized by
Euclidean coordinates, the supporting-cap incidence condition reduces to
\[
  f(p)+f_Q^*(x)-p\cdot x\le\eta.
\]
Equivalently, $x$ is an $\eta$-subgradient of the extended-valued function
$f+\iota_Q$ at $p$, where $\iota_Q$ is $0$ on $Q$ and $+\infty$ outside
$Q$.  This equivalence is standard in Fenchel duality
\cite{Fenchel1949,BrondstedRockafellar1965,Rockafellar1970}.  A related
duality between Macbeath regions associated with a convex function and its
Legendre dual appears in \cite{AryaDaFonsecaMount2026}.

\section{A Biconvex Schur Estimate}\label{sec:biconvex}

Let $I\subset\R$ be a compact interval and let $\mathcal J$ be a finite
linearly ordered set.  Here and below, $|E|$ denotes one-dimensional
Lebesgue measure.  A subset of $\mathcal J$ is called an interval if
it contains every element lying between any two of its elements; the
empty set and singletons are allowed.  Consider a measurable kernel
$K:I\times\mathcal J\to\{0,1\}$.  For $a\in\mathcal J$ and $s\in I$,
write
\[
  I_a:=\{s\in I:K(s,a)=1\},
  \qquad
  \mathcal J_s:=\{a\in\mathcal J:K(s,a)=1\}.
\]
We call $K$ \emph{biconvex} if every $I_a$ is an interval in $I$ and
every $\mathcal J_s$ is an interval in the ordering of $\mathcal J$.

\begin{lemma}[Biconvex Schur estimate]\label{lem:biconvex}
Suppose that $K$ is biconvex and that there is $R>0$ such that
\begin{equation}
  |E|\,\#X\le R
  \label{eq:rectangle-condition}
\end{equation}
whenever $E\subset I$ is measurable, $X\subset\mathcal J$, and $K=1$
on $E\times X$.  Then
\[
  T_Kc(s):=\sum_{a\in\mathcal J}c_aK(s,a)
\]
satisfies
\begin{equation}
  \|T_Kc\|_{L^2(I)}
  \le4\sqrt R\,\|c\|_{\ell^2(\mathcal J)}.
  \label{eq:biconvex-norm}
\end{equation}
\end{lemma}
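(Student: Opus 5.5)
The plan is to expand the square and run a Schur test. Write
\[
  \|T_Kc\|_{L^2(I)}^2=\sum_{a,b\in\mathcal J}c_ac_b\,|I_a\cap I_b|,
\]
so it suffices to show that the Gram kernel $G(a,b):=|I_a\cap I_b|$ defines an operator on $\ell^2(\mathcal J)$ of norm at most $16R$. I will use two consequences of biconvexity.

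\textbf{Betweenness.} If $a<c<b$ in $\mathcal J$ and $s\in I_a\cap I_b$, then $\mathcal J_s$ is an interval containing $a$ and $b$. Hence it contains $c$, and so $I_a\cap I_b\subset I_c$. Consequently $I_a\cap I_b\subset E_{[a,b]}:=\bigcap_{a\le c\le b}I_c$. Since $K=1$ on $E_{[a,b]}\times[a,b]$, the hypothesis \eqref{eq:rectangle-condition} gives
\[
  |I_a\cap I_b|\le\frac{R}{\#[a,b]}.
\]
\textbf{Nesting.} For fixed $a$, the sets $E_k:=I_a\cap I_{a+k}$ (indices taken in the order of $\mathcal J$) decrease in $k\ge0$. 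Each is an interval, and each lies inside every $I_c$ with $c$ between $a$ and $a+k$. The same holds for $k\le0$.

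The betweenness bound alone only gives a Hilbert-type kernel $R/(|i-j|+1)$, which loses a logarithm. The main work is to combine it with the fact that $|E_k|\le\min(|I_a|,|I_{a+k}|)$, so that the losses from long chains are compensated by short intervals.

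For that step I would use the Schur test with weights $w_a:=|I_a|^{-1/2}$; indices with $I_a=\emptyset$ are harmless and can be discarded. The goal is then
\[
  \sum_b|I_a\cap I_b|\,|I_b|^{-1/2}\le 4R\,|I_a|^{-1/2}\quad\text{for each }a .
\]
Here is where I would organize by scale. Put $L=|I_a|$ and split $b$ according to whether $|I_b|\ge L$ or $|I_b|<L$. If $|I_b|\ge L$, then all such $b$ with $E_k\ne\emptyset$ on one side of $a$ overlap $I_a$. The indices between them force, via betweenness and the rectangle condition, at most about $R/|E_k|$ of them with overlap of size $\ge|E_k|$. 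A dyadic layering $|E_k|\in[2^{-j-1}L,2^{-j}L)$ then gives a sum of the form
\[
  \sum_j 2^{-j}L\cdot\frac{R}{2^{-j}L}\cdot L^{-1/2}\cdot(\text{decay}).
\]
The decay should come from the nesting, since along one side the overlaps $|E_k|$ decrease. If $|I_b|<L$, I would swap roles and use the symmetric estimate. Summing the two sides and the two size regimes produces the constant: $2\cdot2=4$ from sides and regimes, squared to $16$ in the Schur bound, which yields $4\sqrt R$.

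The main obstacle is the middle step: obtaining a summable bound rather than the harmonic $\sum_k R/(k+1)$. What I would try first is to exploit both interval structures at once. On one side of $a$ the intervals $E_k$ are nested subintervals of $I_a$, so their endpoints move monotonically. Each $s\in I_a$ therefore belongs to $E_k$ exactly for $k$ in an initial segment $[0,r(s)]$. This gives
\[
  \sum_{k\ge0}|E_k|=\int_{I_a}(r(s)+1)\,ds .
\]
By the rectangle condition applied to the level sets $\{s\in I_a:r(s)\ge k\}\times[a,a+k]$, the distribution of $r$ satisfies $|\{r\ge k\}|\le R/(k+1)$. Inserting the weight $|I_{a+k}|^{-1/2}\le|E_k|^{-1/2}$ converts this into $\sum_k|E_k|^{1/2}$, and a layer-cake comparison should bound it by $2\sqrt{R}\cdot\sqrt{R/L}\cdot$const. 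If this weight choice fails to close, the fallback is to pick the Schur weights adaptively as $w_a:=\bigl(\int_{I_a}\#\mathcal J_s\,ds\bigr)^{-1/2}$.
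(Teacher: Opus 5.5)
\textbf{There is a genuine gap: the Schur weight $w_a=|I_a|^{-1/2}$ has the wrong exponent, and the inequality you set as the goal is false.}

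Take $I=[0,1]$, $\mathcal J=\{1,\dots,N\}$, and $K(s,b)=1$ exactly when $s\le 1/b$, so $I_b=[0,1/b]$. Every $I_b$ is an interval, and every $\mathcal J_s=\{1,\dots,\min(N,\lfloor 1/s\rfloor)\}$ is an interval. Any all-one rectangle has $E\subset I_{\max X}$, so the rectangle condition holds with $R=1$. For $a=1$ your target reads
\[
  \sum_{b=1}^N|I_1\cap I_b|\,|I_b|^{-1/2}=\sum_{b=1}^N b^{-1/2}\le 4 .
\]
This fails for $N\ge7$, and no constant independent of $N$ can replace $4$. It is exactly where your layer-cake step breaks. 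The only available information is $|E_k|\le\min(L,R/(k+1))$, and $\sum_k (R/(k+1))^{1/2}$ diverges.

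Your fallback weight $w_a=(\int_{I_a}\#\mathcal J_s\,ds)^{-1/2}$ fails on the same example. The row sums are $1+H_N-H_a$ with $H_n=\sum_{j\le n}1/j$, so $w_b\ge H_N^{-1/2}$ and $w_1=H_N^{-1/2}$. The Schur inequality at $a=1$ then forces a constant at least $H_N\approx\log N$.

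Yet the Gram kernel here is $1/\max(a,b)$, which is bounded on $\ell^2$. The Schur test closes with $w_b=b^{-1/2}=|I_b|^{+1/2}$. Separately, your constant bookkeeping is off: a Schur constant $C$ for the Gram matrix gives $\|T_K\|\le\sqrt C$, so reaching $4\sqrt R$ requires $C=16R$.

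\textbf{What is missing.} You need the weight $w_a=|I_a|^{1/2}$ together with a row-tail estimate that your proposal never uses. For fixed $s$, each $I_b$ with $b\in\mathcal J_s$ is an interval containing $s$. So if $|I_b|\ge r$, it contains $(s-r'/2,s)$ or $(s,s+r'/2)$ for every $r'<r$. The rectangle condition then gives $\#\{b\in\mathcal J_s:|I_b|\ge r\}\le\min(\#\mathcal J_s,4R/r)$. By layer cake, $\sum_{b\in\mathcal J_s}|I_b|^{1/2}\le4\sqrt{R\,\#\mathcal J_s}$.

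Your nesting observation, applied on both sides of $a$, gives the dual column estimate $|\{s\in I_a:\#\mathcal J_s\ge k\}|\le 4R/k$. By layer cake, $\int_{I_a}(\#\mathcal J_s)^{1/2}\,ds\le4\sqrt{R|I_a|}$. Chaining the two estimates,
\[
  \sum_b|I_a\cap I_b|\,|I_b|^{1/2}
  =\int_{I_a}\sum_{b\in\mathcal J_s}|I_b|^{1/2}\,ds
  \le4\sqrt R\int_{I_a}(\#\mathcal J_s)^{1/2}\,ds
  \le16R\,|I_a|^{1/2},
\]
which gives $\|T_K\|\le4\sqrt R$. This is the paper's argument: it runs the weighted Schur test on $T_K$ directly with weights $(n(s)/d_a)^{1/4}$, and the same two tail bounds, transported to the Gram matrix, give the display above.
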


\begin{proof}
Set $d_a:=|I_a|$ and $n(s):=\#\mathcal J_s$, and discard columns with
$d_a=0$.

\emph{Column tails.}
Fix $a\in\mathcal J$ and an integer $k\ge1$, and put
$\ell=\lceil k/2\rceil$.  Let $B_a^-$ be the block of $\ell$
consecutive elements ending at $a$, when it exists, and let $B_a^+$ be
the block of $\ell$ consecutive elements beginning at $a$, when it
exists.  Any interval in $\mathcal J$ of cardinality at least $k$ that
contains $a$ has at least $\ell$ elements on one side of $a$, counting
$a$ itself, and therefore contains at least one of the two existing
blocks $B_a^-$ and $B_a^+$.  For either existing block $B$, the set
\[
  E_B:=\{s\in I:B\subset\mathcal J_s\}
\]
forms an all-one rectangle $E_B\times B$.  By
\eqref{eq:rectangle-condition},
$|E_B|\le R/\ell\le2R/k$.  Taking the union of the two possibilities and
also using the trivial bound by $d_a$, we obtain
\begin{equation}
  \big|\{s\in I_a:n(s)\ge k\}\big|
  \le
  \min\left(d_a,\frac{4R}{k}\right).
  \label{eq:column-tail}
\end{equation}

\emph{Row tails.}
Conversely, fix $s\in I$ and $r>0$.  For $a\in\mathcal J_s$, let
$L_a$ and $R_a$ be the lengths of the portions of $I_a$ to the left and
right of $s$.  Since a point has measure zero,
$L_a+R_a=d_a$.  Thus, if $d_a\ge r$, then either $L_a\ge r/2$ or
$R_a\ge r/2$.

Let $0<r'<r$ and set
\[
  E_s^-:=(s-r'/2,s),
  \qquad
  E_s^+:=(s,s+r'/2).
\]
If $L_a\ge r/2$, then $E_s^-\subset I_a$; in particular, this interval
lies inside the ambient interval $I$.  Similarly, if $R_a\ge r/2$, then
$E_s^+\subset I_a$.  Split the relevant columns according to these two
possibilities, assigning a column arbitrarily if both occur.  For each
nonempty class, the corresponding fixed interval, together with that class
of columns, is an all-one rectangle.  Since each fixed interval has length
$r'/2$, applying \eqref{eq:rectangle-condition} to the two classes gives
\[
  \#\{a\in\mathcal J_s:d_a\ge r\}
  \le
  \min\left(n(s),\frac{4R}{r'}\right).
\]
Letting $r'\uparrow r$ yields
\begin{equation}
  \#\{a\in\mathcal J_s:d_a\ge r\}
  \le
  \min\left(n(s),\frac{4R}{r}\right).
  \label{eq:row-tail}
\end{equation}

\emph{Layer-cake estimates.}
For $t>0$, take $k=\lfloor t^2\rfloor+1$ in
\eqref{eq:column-tail}.  The layer-cake formula gives
\begin{align}
  \int_{I_a}\sqrt{n(s)}\,ds
  &=\int_0^\infty
    \big|\{s\in I_a:\sqrt{n(s)}>t\}\big|\,dt \notag\\
  &\le\int_0^\infty
    \min\left(d_a,\frac{4R}{t^2}\right)dt
  =4\sqrt{Rd_a}.
  \label{eq:first-schur}
\end{align}
Similarly, using \eqref{eq:row-tail},
\begin{align}
  \sum_{a\in\mathcal J_s}\sqrt{d_a}
  &=\int_0^\infty
    \#\{a\in\mathcal J_s:\sqrt{d_a}>t\}\,dt \notag\\
  &\le\int_0^\infty
    \min\left(n(s),\frac{4R}{t^2}\right)dt
  =4\sqrt{Rn(s)}.
  \label{eq:second-schur}
\end{align}

\emph{Weighted Schur test.}
When $n(s)>0$, set
\[
  w(s,a):=\left(\frac{n(s)}{d_a}\right)^{1/4}
  \qquad(a\in\mathcal J_s).
\]
Weighted Cauchy--Schwarz gives
\[
  |T_Kc(s)|^2
  \le
  \left(\sum_{a\in\mathcal J_s}|c_a|^2w(s,a)^2\right)
  \left(\sum_{a\in\mathcal J_s}w(s,a)^{-2}\right)
\]
\[
  =
  \left(\sum_{a\in\mathcal J_s}|c_a|^2
    \frac{\sqrt{n(s)}}{\sqrt{d_a}}\right)
  \left(\sum_{a\in\mathcal J_s}
    \frac{\sqrt{d_a}}{\sqrt{n(s)}}\right).
\]
By \eqref{eq:second-schur}, the second factor is at most $4\sqrt R$.
Integrating, interchanging the sum and integral, and using
\eqref{eq:first-schur}, we obtain
\[
\begin{aligned}
  \|T_Kc\|_{L^2(I)}^2
  &\le4\sqrt R
    \sum_{a\in\mathcal J}|c_a|^2\frac{1}{\sqrt{d_a}}
    \int_{I_a}\sqrt{n(s)}\,ds\\
  &\le16R\sum_{a\in\mathcal J}|c_a|^2.
\end{aligned}
\]
This proves \eqref{eq:biconvex-norm}.
\end{proof}

\section{Fenchel--Young Incidence Bounds}\label{sec:fenchel}

Let $Q\subset\R^m$ be compact and convex, and let $f:Q\to\R$ be
continuous and convex.  Its conjugate restricted to $Q$ is
\[
  f_Q^*(x):=\max_{p\in Q}\bigl(p\cdot x-f(p)\bigr),
  \qquad x\in\R^m,
\]
and the corresponding Fenchel--Young gap is
\begin{equation}
  \cF_f(p,x):=f(p)+f_Q^*(x)-p\cdot x\ge0.
  \label{eq:fenchel-gap}
\end{equation}
The gap is zero exactly when $p$ realizes the maximum in the definition
of $f_Q^*(x)$.  If $\widetilde f:=f+\iota_Q$ denotes the
extended-valued convex function on $\R^m$, then
$\cF_f(p,x)\le\eta$ is equivalent to
$x\in\partial_\eta\widetilde f(p)$.

In one dimension, the relation $\cF_f(s,a)\le\eta$ is a thickening of
the monotone graph $\partial f$.  Its horizontal and vertical sections are
therefore intervals.

\begin{proposition}[One-dimensional Fenchel--Young bound]
\label{prop:one-dimensional}
Let $f:[-1,1]\to\R$ be continuous and convex, let $\delta>0$, let
$\Lambda\subset\delta\mathbb Z$ be finite, and let $\eta\ge0$.  Then
\[
  T_fc(s):=\sum_{a\in\Lambda}c_a
  \1_{\{\cF_f(s,a)\le\eta\}}
\]
satisfies
\begin{equation}
  \|T_fc\|_{L^2([-1,1])}
  \le
  C\left(1+\frac{\eta}{\delta}\right)^{1/2}
  \|c\|_{\ell^2(\Lambda)},
  \label{eq:one-dimensional-bound}
\end{equation}
where $C$ is an absolute constant.
\end{proposition}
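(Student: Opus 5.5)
The plan is to apply Lemma~\ref{lem:biconvex} with $I=[-1,1]$, $\mathcal J=\Lambda$ (ordered as a subset of $\R$), and kernel $K(s,a)=\1_{\{\cF_f(s,a)\le\eta\}}$. Then $T_f=T_K$, and it suffices to verify three things: measurability, biconvexity, and the rectangle condition \eqref{eq:rectangle-condition} with $R\lesssim 1+\eta/\delta$.

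\emph{Measurability.} Since $f$ is continuous on $[-1,1]$, the conjugate $f_Q^*$ is a finite maximum of affine functions over a compact set, hence finite and continuous on $\R$. Therefore $s\mapsto\cF_f(s,a)$ is continuous, and each $I_a$ is closed.

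\emph{Biconvexity.} For fixed $a$, the map $s\mapsto f(s)-sa+f_Q^*(a)$ is convex. Its sublevel set $I_a$ is therefore an interval in $[-1,1]$. For fixed $s$, the map $a\mapsto f_Q^*(a)-sa+f(s)$ is convex on $\R$, since $f_Q^*$ is a supremum of affine functions. Its sublevel set $\{a\in\R:\cF_f(s,a)\le\eta\}$ is a real interval, and intersecting it with $\Lambda$ gives an order-interval $\mathcal J_s$ of $\Lambda$.

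\emph{Rectangle condition.} Suppose $K=1$ on $E\times X$. Take $s_1<s_2$ in $E$ and $a_1<a_2$ in $X$. The terms $f(s_i)$ and $f_Q^*(a_j)$ cancel in the cross-difference, leaving the identity
\[
  \cF_f(s_1,a_2)+\cF_f(s_2,a_1)-\cF_f(s_1,a_1)-\cF_f(s_2,a_2)
  =(s_2-s_1)(a_2-a_1).
\]
The left side is at most $2\eta$, because the two positive terms are each $\le\eta$ and the two subtracted terms are each $\ge0$ by \eqref{eq:fenchel-gap}. Taking suprema over such pairs gives
\[
  \diam(E)\,\diam(X)\le2\eta .
\]
Since $X\subset\delta\mathbb Z$, we have $\diam(X)\ge(\#X-1)\delta$, and also $|E|\le\diam(E)$. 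Hence $|E|(\#X-1)\le2\eta/\delta$. Combined with $|E|\le2$, this gives
\[
  |E|\,\#X\le|E|+\frac{2\eta}{\delta}\le 2\left(1+\frac{\eta}{\delta}\right).
\]
The degenerate cases $\#X\le1$ or $|E|=0$ are covered by the same inequality. So we may take $R=2(1+\eta/\delta)$.

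\emph{Conclusion.} Lemma~\ref{lem:biconvex} now yields \eqref{eq:one-dimensional-bound} with $C=4\sqrt2$.

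The only step needing care is the sign in the cross-difference identity. We want the two incidences that are assumed to hold ($\le\eta$) to appear with positive sign, so that nonnegativity of the gap disposes of the other two. Separately, the lattice spacing must be used through $\diam(X)\ge(\#X-1)\delta$ rather than $\#X\,\delta$; the "$+1$" in $1+\eta/\delta$ accounts for the single-column case.
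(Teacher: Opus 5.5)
Your proposal is correct and follows the paper's proof essentially step for step. Biconvexity comes from convexity of $s\mapsto\cF_f(s,a)$ and $a\mapsto\cF_f(s,a)$, the rectangle condition comes from the cross-difference identity in which $f$ and $f_Q^*$ cancel, and Lemma~\ref{lem:biconvex} then finishes; the only differences are an explicit measurability check and the split $\#X=(\#X-1)+1$, which gives $R=2(1+\eta/\delta)$ instead of the paper's $\max(2,4\eta/\delta)$ and hence $C=4\sqrt2$ rather than $8$.
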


\begin{proof}
For fixed $a$, the set
\[
  J_a:=\{s\in[-1,1]:\cF_f(s,a)\le\eta\}
\]
is an interval, since $s\mapsto\cF_f(s,a)$ is convex.  For fixed $s$,
the admissible $a$ form an interval in the ordering of $\Lambda$, since
$a\mapsto f_{[-1,1]}^*(a)-as$ is convex.  Thus the incidence kernel is
biconvex.

Suppose that the kernel is one on $E\times X$, and put $k=\#X$.  The
cases $E=\varnothing$ and $k=0$ are trivial, and if $k=1$, then
$k|E|\le2$.  If $k\ge2$, let
$a_-:=\min X$ and $a_+:=\max X$.  For $s,t\in E$, direct cancellation
gives the exact identity
\begin{equation}
  \cF_f(s,a_-)+\cF_f(t,a_+)
  -\cF_f(s,a_+)-\cF_f(t,a_-)
  =(s-t)(a_+-a_-).
  \label{eq:cross-difference}
\end{equation}
All four gaps on the left lie in $[0,\eta]$, so
\[
  |s-t|(a_+-a_-)\le2\eta.
\]
The crucial point is that the cross-difference eliminates $f$ completely;
thus the resulting rectangle estimate is uniform over all convex
functions.
Taking the supremum over $s,t\in E$, using $|E|\le\diam(E)$ and
$a_+-a_-\ge(k-1)\delta$, we find
\[
  k|E|
  \le\frac{2k}{k-1}\cdot\frac{\eta}{\delta}
  \le\frac{4\eta}{\delta}.
\]
Thus the rectangle hypothesis of Lemma \ref{lem:biconvex} holds with
\[
  R_0:=\max\left(2,\frac{4\eta}{\delta}\right)
  \le4\left(1+\frac{\eta}{\delta}\right).
\]
Lemma \ref{lem:biconvex} gives
\[
  \|T_fc\|_2
  \le8\left(1+\frac{\eta}{\delta}\right)^{1/2}\|c\|_2,
\]
which proves \eqref{eq:one-dimensional-bound}.
\end{proof}

\begin{theorem}[Multidimensional Fenchel--Young bound]
\label{thm:fenchel-bound}
Let $m\ge1$, let $Q=[-1,1]^m$, let $f:Q\to\R$ be continuous and convex,
let $\delta>0$, let $\Lambda\subset\delta\mathbb Z^m$ be finite, and
let $\eta\ge0$.  Define
\[
  T_fc(p):=\sum_{x\in\Lambda}c_x
  \1_{\{\cF_f(p,x)\le\eta\}}.
\]
Then
\begin{equation}
  \|T_f\|_{\ell^2(\Lambda)\to L^2(Q)}
  \le
  C_m\left(1+\frac{\eta}{\delta}\right)^{m/2},
  \label{eq:fenchel-bound}
\end{equation}
where $C_m$ depends only on $m$.  In particular, if
$0\le\eta\le A\delta$, then the operator norm is bounded by a constant
depending only on $m$ and $A$.
\end{theorem}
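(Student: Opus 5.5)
The plan is to induct on $m$. The base case $m=1$ is exactly Proposition \ref{prop:one-dimensional}. For the inductive step, I will split off one coordinate using an exact factorization of the Fenchel--Young gap, and then combine the one-dimensional bound with the $(m-1)$-dimensional bound by Fubini. We may assume $c\ge0$, since $|T_fc|\le T_f|c|$ pointwise.

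\emph{Factorization.} Write $p=(s,q)$ with $s\in[-1,1]$ and $q\in Q':=[-1,1]^{m-1}$, and write $x=(a,b)$ with $a\in\delta\mathbb Z$ and $b\in\delta\mathbb Z^{m-1}$. Set $\phi_q(s):=f(s,q)$, which is continuous and convex on $[-1,1]$, and
\[
  g_a(q):=\min_{s\in[-1,1]}\bigl(f(s,q)-sa\bigr)=-\phi_{q,[-1,1]}^*(a).
\]
Then $g_a$ is convex on $Q'$, because it is a partial minimization of a jointly convex function over a compact convex fiber. It is continuous by compactness and uniform continuity of $f$. Computing the conjugate in two stages gives
\[
  f_Q^*(a,b)=\max_{q\in Q'}\Bigl(q\cdot b+\max_{s}\bigl(sa-f(s,q)\bigr)\Bigr)=(g_a)_{Q'}^*(b).
\]
Adding and subtracting $\phi_q^*(a)=-g_a(q)$ then yields
\[
  \cF_f((s,q),(a,b))=\cF_{\phi_q}(s,a)+\cF_{g_a}(q,b).
\]
Both summands are nonnegative, so $\cF_f\le\eta$ forces $\cF_{\phi_q}(s,a)\le\eta$ and $\cF_{g_a}(q,b)\le\eta$. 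I expect this step to be the main point needing care. One must check that the restricted conjugates used in Proposition \ref{prop:one-dimensional} and in the induction hypothesis are exactly these, relative to $[-1,1]$ and $Q'$ respectively, and that $g_a$ satisfies the hypotheses (continuity and convexity on the cube $Q'$).

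\emph{Iteration.} Since $c\ge0$, we have the pointwise bound
\[
  T_fc(s,q)\le\sum_{a}\1_{\{\cF_{\phi_q}(s,a)\le\eta\}}\,D_a(q),
  \qquad
  D_a(q):=\sum_{b:(a,b)\in\Lambda}c_{a,b}\1_{\{\cF_{g_a}(q,b)\le\eta\}}.
\]
Fix $q$. Apply Proposition \ref{prop:one-dimensional} to $\phi_q$, with the finite set of first coordinates of $\Lambda$ and coefficients $D_a(q)$:
\[
  \int_{-1}^1|T_fc(s,q)|^2\,ds\le C^2\Bigl(1+\frac\eta\delta\Bigr)\sum_aD_a(q)^2.
\]
Next integrate in $q$. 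For each fixed $a$, the function $D_a$ is precisely $T_{g_a}$ applied to the slice $b\mapsto c_{a,b}$, on the finite set $\{b:(a,b)\in\Lambda\}\subset\delta\mathbb Z^{m-1}$. The induction hypothesis therefore gives
\[
  \int_{Q'}D_a^2\le C_{m-1}^2\Bigl(1+\frac\eta\delta\Bigr)^{m-1}\sum_b c_{a,b}^2.
\]
Summing over $a$ and using Fubini (all functions involved are measurable, since the gaps are continuous) gives \eqref{eq:fenchel-bound} with $C_m=C\,C_{m-1}$, hence $C_m=8^m$ from the constant in Proposition \ref{prop:one-dimensional}. The final claim, for $0\le\eta\le A\delta$, follows immediately.
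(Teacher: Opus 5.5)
Your proposal is correct and follows essentially the same route as the paper: induction on $m$ via the exact factorization $\cF_f((s,q),(a,b))=\cF_{\phi_q}(s,a)+\cF_{g_a}(q,b)$ with $g_a(q)=\min_s\bigl(f(s,q)-as\bigr)$. As in the paper, you then apply Proposition~\ref{prop:one-dimensional} in $s$ for each fixed $q$ and the induction hypothesis to each $g_a$ on its lattice fiber, giving $C_m=C_1C_{m-1}$. Your added measurability remark and the explicit constant $8^m$ are harmless refinements.
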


\begin{proof}
We induct on $m$.  Proposition \ref{prop:one-dimensional} is the case
$m=1$.

Write $p=(s,q)$ and $x=(a,b)$, where $s,a\in\R$ and
$q,b\in\R^{m-1}$, and put $Q':=[-1,1]^{m-1}$.  For each first
coordinate $a$ occurring in $\Lambda$, define
\[
  g_a(q):=\min_{s\in[-1,1]}\bigl(f(s,q)-as\bigr).
\]
The function $g_a$ is convex.  Indeed, for $q_0,q_1\in Q'$ choose
minimizers $s_0,s_1$, and for $0\le\lambda\le1$ set
$q_\lambda=\lambda q_0+(1-\lambda)q_1$ and
$s_\lambda=\lambda s_0+(1-\lambda)s_1$.  Joint convexity gives
\[
  g_a(q_\lambda)
  \le f(s_\lambda,q_\lambda)-as_\lambda
  \le\lambda g_a(q_0)+(1-\lambda)g_a(q_1).
\]
Moreover,
\[
  |g_a(q)-g_a(q')|
  \le\sup_{s\in[-1,1]}|f(s,q)-f(s,q')|,
\]
so uniform continuity of $f$ on the compact cube shows that $g_a$ is
continuous.

For fixed $q$, let $\phi_q(s):=f(s,q)$.  The relevant conjugates satisfy
\begin{align*}
  g_{a,Q'}^*(b)
  &=\max_{q\in Q'}\max_{s\in[-1,1]}
    \bigl(as+b\cdot q-f(s,q)\bigr)
   =f_Q^*(a,b),\\
  \phi_{q,[-1,1]}^*(a)
  &=\max_{s\in[-1,1]}\bigl(as-f(s,q)\bigr)
   =-g_a(q).
\end{align*}
Consequently, the gap factors exactly as
\begin{equation}
  \cF_f((s,q),(a,b))
  =\cF_{\phi_q}(s,a)+\cF_{g_a}(q,b).
  \label{eq:gap-factorization}
\end{equation}
Both terms are nonnegative.  This exact identity is the mechanism that
permits coordinate iteration.  Hence
\begin{equation}
  \1_{\{\cF_f\le\eta\}}
  \le
  \1_{\{\cF_{\phi_q}\le\eta\}}
  \1_{\{\cF_{g_a}\le\eta\}}.
  \label{eq:indicator-factorization}
\end{equation}

Because the kernel is nonnegative, $|T_fc|\le T_f|c|$.  Set
\[
  u_a(q)
  :=\sum_{b:(a,b)\in\Lambda}|c_{a,b}|
  \1_{\{\cF_{g_a}(q,b)\le\eta\}}.
\]
By \eqref{eq:indicator-factorization},
\[
  |T_fc(s,q)|
  \le
  \sum_a u_a(q)
  \1_{\{\cF_{\phi_q}(s,a)\le\eta\}}.
\]
Apply Proposition \ref{prop:one-dimensional} in the $s$ variable for
each fixed $q$, integrate over $q$, and apply the induction hypothesis
to each $g_a$ on the lattice fiber
$\{b:(a,b)\in\Lambda\}\subset\delta\mathbb Z^{m-1}$.  This gives
\[
\begin{aligned}
  \|T_fc\|_{L^2(Q)}^2
  &\le
  C_1^2\left(1+\frac{\eta}{\delta}\right)
  \sum_a\|u_a\|_{L^2(Q')}^2\\
  &\le
  C_1^2C_{m-1}^2
  \left(1+\frac{\eta}{\delta}\right)^m
  \sum_{(a,b)\in\Lambda}|c_{a,b}|^2.
\end{aligned}
\]
Taking $C_m=C_1C_{m-1}$ proves \eqref{eq:fenchel-bound}.
\end{proof}

\section{Supporting-Direction Caps}\label{sec:caps}

For a compact convex set $K\subset\R^d$, let
\[
  H_K(q):=\max_{y\in K}q\cdot y,
  \qquad q\in\R^d,
\]
be its homogeneous support function, and write
$h_K:=H_K|_{S^{d-1}}$.  For $z\in K$ and $\tau>0$, define
\[
  \Omega_K(z,\tau)
  :=\{u\in S^{d-1}:h_K(u)-z\cdot u\le\tau\}.
\]
Thus $\Omega_K(z,\tau)$ consists of directions in which $z$ has support
deficit at most $\tau$.

\begin{proposition}[Quantitative supporting-cap $L^2$ bound]
\label{prop:cap-bound}
Let $d\ge2$, let $K\subset\R^d$ be compact and convex, let $\delta>0$,
let $\Gamma\subset\delta\mathbb Z^d\cap K$ be finite, and let $\tau>0$.
Define
\[
  \cC_{K,\tau}c(u)
  :=\sum_{z\in\Gamma}c_z\1_{\Omega_K(z,\tau)}(u).
\]
Then
\begin{equation}
  \|\cC_{K,\tau}c\|_{L^2(S^{d-1},d\sigma)}
  \le
  C_d\left(1+\frac{\tau}{\delta}\right)^{d/2}
  \|c\|_{\ell^2(\Gamma)},
  \label{eq:cap-bound}
\end{equation}
where $C_d$ depends only on $d$.
\end{proposition}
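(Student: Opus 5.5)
We reduce to Theorem \ref{thm:fenchel-bound} with $m=d-1$ by passing to gnomonic charts. Off a $\sigma$-null set, $S^{d-1}$ is covered by the $2d$ open regions $U_{i,\pm}=\{u:\pm u_i>|u_j|\text{ for }j\ne i\}$. It suffices to bound $\|\cC_{K,\tau}c\|_{L^2(U)}$ for one such region, say $U=\{u:u_d>|u_j|,\ j<d\}$; the others follow after a signed permutation of coordinates, which preserves $\delta\mathbb Z^d$ and the form of the hypotheses. On $U$ we write $u=(p,1)/\sqrt{1+|p|^2}$ with $p\in(-1,1)^{d-1}$. The Jacobian $d\sigma=(1+|p|^2)^{-d/2}\,dp$ is bounded above by $1$, so $\|F\|_{L^2(U,\sigma)}\le\|F\circ\text{chart}\|_{L^2(Q,dp)}$ with $Q=[-1,1]^{d-1}$.

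Next we convert the incidence condition. For $z=(x,t)\in\Gamma$, homogeneity of $H_K$ gives
\[
h_K(u)-z\cdot u=\frac{H_K(p,1)-p\cdot x-t}{\sqrt{1+|p|^2}}.
\]
Since $\sqrt{1+|p|^2}\le\sqrt d$, the condition $u\in\Omega_K(z,\tau)$ implies $f(p)-p\cdot x-t\le\sqrt d\,\tau$, where $f(p):=H_K(p,1)$. This $f$ is convex and continuous on $Q$ because it is a support function. We use the decomposition from the introduction:
\[
f(p)-p\cdot x-t=\cF_f(p,x)+(\beta(x)-t),\qquad \beta(x)=-f_Q^*(x).
\]
Both terms are nonnegative. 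For the first this is Fenchel--Young. For the second, $z\in K$ gives $f(q)\ge q\cdot x+t$ for all $q$, hence $\beta(x)\ge t$. Setting $\eta:=\sqrt d\,\tau$, an incidence therefore forces both $\cF_f(p,x)\le\eta$ and $0\le\beta(x)-t\le\eta$.

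Now we group lattice points by their horizontal coordinate. For fixed $x\in\delta\mathbb Z^{d-1}$, the heights $t\in\delta\mathbb Z$ with $t\in[\beta(x)-\eta,\beta(x)]$ number at most $N:=1+\eta/\delta$. Write $\Lambda$ for the set of horizontal projections of $\Gamma$, and set
\[
\tilde c_x:=\sum_{t:(x,t)\in\Gamma,\ \beta(x)-t\le\eta}|c_{x,t}|.
\]
Then, pointwise in $p$,
\[
|\cC_{K,\tau}c(u(p))|\le\sum_{x\in\Lambda}\tilde c_x\1_{\{\cF_f(p,x)\le\eta\}}=T_f\tilde c(p).
\]
By Cauchy--Schwarz, $\|\tilde c\|_{\ell^2}^2\le N\|c\|_{\ell^2}^2$. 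Theorem \ref{thm:fenchel-bound} with $m=d-1$ then gives
\[
\|\cC_{K,\tau}c\|_{L^2(U)}\le C_{d-1}\Bigl(1+\frac{\eta}{\delta}\Bigr)^{(d-1)/2}N^{1/2}\|c\|_{\ell^2}\lesssim_d\Bigl(1+\frac{\tau}{\delta}\Bigr)^{d/2}\|c\|_{\ell^2}.
\]
Summing over the $2d$ charts (and taking square roots) proves \eqref{eq:cap-bound}.

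The main point needing care is the conversion between the spherical deficit and the homogeneous deficit. The factor $\sqrt{1+|p|^2}$ is bounded on the closed chart, and the one-sided inequality we use loses only the constant $\sqrt d$, which is absorbed into $C_d$. One must also check that the vertical-slack counting genuinely needs $z\in K$ to get $\beta(x)\ge t$; this is where the hypothesis $\Gamma\subset K$ enters. Finally, the chart boundaries are $\sigma$-null, so ignoring them is harmless.
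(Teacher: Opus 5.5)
Your proposal is correct and follows essentially the same route as the paper. Both use gnomonic charts via signed permutations and the exact splitting of the homogeneous deficit $f(p)-p\cdot x-t$ into the Fenchel--Young gap $\cF_f(p,x)$ plus the vertical slack $\beta(x)-t\ge0$ (which is where $\Gamma\subset K$ is used). Both then count at most $1+\sqrt d\,\tau/\delta$ admissible heights per horizontal lattice point, aggregate the coefficients by Cauchy--Schwarz, and apply Theorem \ref{thm:fenchel-bound} with $m=d-1$ and $\eta=\sqrt d\,\tau$. The only cosmetic difference is that you select the heights by the condition $\beta(x)-t\le\eta$ directly, whereas the paper requires the cap to meet the current chart; both give the same multiplicity bound.
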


The lattice assumption controls the horizontal variables in the
Fenchel--Young estimate, while the ratio $\tau/\delta$ controls the number
of possible vertical heights over each horizontal lattice point.

\begin{proof}
Put $m=d-1$.  Apart from the measure-zero set on which two coordinates
have equal maximal absolute value, the sphere is the disjoint union of the
$2d$ open regions on which one signed coordinate is strictly larger in
absolute value than all the others.  On each region, a signed coordinate
permutation $S$ sends the distinguished coordinate to the positive last
coordinate, and every direction in the region has a unique representation
\begin{equation}
  Su=\frac{(p,1)}{\sqrt{1+|p|^2}},
  \qquad p\in(-1,1)^m.
  \label{eq:gnomonic-chart}
\end{equation}
We enlarge the parameter domain to $Q:=[-1,1]^m$ when applying Theorem
\ref{thm:fenchel-bound}.  In these coordinates,
\[
  d\sigma(u)=(1+|p|^2)^{-d/2}\,dp,
\]
so spherical and Lebesgue measure are comparable on the chart.

Because $S$ is a signed permutation, it preserves
$\delta\mathbb Z^d$.  Write $Sz=(x,t)$ with
$x\in\delta\mathbb Z^m$ and $t\in\delta\mathbb Z$.  Let $K_S:=SK$ and
define
\[
  f(p):=H_{K_S}(p,1),
  \qquad p\in Q.
\]
For $u$ and $p$ related by \eqref{eq:gnomonic-chart}, define the
homogeneous support deficit
\begin{equation}
  \operatorname{def}_z(p)
  :=f(p)-p\cdot x-t
  =\sqrt{1+|p|^2}\,[h_K(u)-z\cdot u].
  \label{eq:homogeneous-deficit}
\end{equation}

Define the chartwise support barrier
\[
  \beta(x):=\min_{p\in Q}\bigl(f(p)-p\cdot x\bigr)=-f_Q^*(x).
\]
Because the slopes are restricted to the chart $Q$, no assertion is made
that $\beta(x)$ equals the actual upper boundary of the vertical fiber of
$K_S$.  What is needed is the support inequality: if
$(x,t)=Sz\in K_S$, then $f(p)\ge p\cdot x+t$ for every $p\in Q$, and hence
$t\le\beta(x)$.

The support deficit decomposes exactly as
\begin{equation}
  \underbrace{\operatorname{def}_z(p)}_{\text{homogeneous support deficit}}
  =
  \underbrace{\cF_f(p,x)}_{\text{Fenchel--Young gap}}
  +
  \underbrace{\beta(x)-t}_{\text{vertical slack}}.
  \label{eq:deficit-decomposition}
\end{equation}
Indeed, $f_Q^*(x)=-\beta(x)$.  Both terms on the right are nonnegative, so
\begin{equation}
  \cF_f(p,x)\le\operatorname{def}_z(p),
  \qquad
  0\le\beta(x)-t\le\operatorname{def}_z(p).
  \label{eq:gap-below-deficit}
\end{equation}
Geometrically, the first term records how far the horizontal coordinate
$x$ is from being dual to the chart direction $p$, while the second records
how far the height $t$ lies below the chartwise supporting barrier.

For each horizontal lattice point $x$, define
\[
  \mathcal T_x
  :=\{t\in\delta\mathbb Z:
      S^{-1}(x,t)\in\Gamma
      \text{ and }
      \Omega_K(S^{-1}(x,t),\tau)
      \text{ meets the current chart}\}.
\]
If $t\in\mathcal T_x$, then \eqref{eq:homogeneous-deficit} is at most
$\sqrt d\,\tau$ at some $p\in(-1,1)^m$.  By
\eqref{eq:gap-below-deficit},
\[
  t\le\beta(x)\le t+\sqrt d\,\tau.
\]
Thus every $t\in\mathcal T_x$ lies in an interval of length
$\sqrt d\,\tau$, and therefore
\begin{equation}
  \#\mathcal T_x
  \le
  \left\lfloor\frac{\sqrt d\,\tau}{\delta}\right\rfloor+1
  \le C_d\left(1+\frac{\tau}{\delta}\right).
  \label{eq:vertical-multiplicity}
\end{equation}

For each $x$, aggregate the coefficients over the relevant heights:
\[
  \widetilde c_x
  :=\sum_{t\in\mathcal T_x}|c_{S^{-1}(x,t)}|.
\]
By Cauchy--Schwarz and \eqref{eq:vertical-multiplicity},
\begin{equation}
  \sum_x|\widetilde c_x|^2
  \le
  C_d\left(1+\frac{\tau}{\delta}\right)
  \sum_{z\in\Gamma}|c_z|^2.
  \label{eq:aggregation-bound}
\end{equation}
Moreover, \eqref{eq:homogeneous-deficit} and
\eqref{eq:gap-below-deficit} give, pointwise on the chart,
\[
  |\cC_{K,\tau}c(u(p))|
  \le
  \sum_x\widetilde c_x
  \1_{\{\cF_f(p,x)\le\sqrt d\,\tau\}}.
\]
Theorem \ref{thm:fenchel-bound}, applied with
$\eta=\sqrt d\,\tau$, yields
\[
  \|\cC_{K,\tau}c\|_{L^2(\text{current chart})}
  \le
  C_d\left(1+\frac{\tau}{\delta}\right)^{(d-1)/2}
  \|\widetilde c\|_{\ell^2}.
\]
Combining this with \eqref{eq:aggregation-bound} gives
\[
  \|\cC_{K,\tau}c\|_{L^2(\text{current chart})}
  \le
  C_d\left(1+\frac{\tau}{\delta}\right)^{d/2}
  \|c\|_{\ell^2(\Gamma)}.
\]
Summing the squared estimates over the $2d$ chart pieces proves
\eqref{eq:cap-bound}.
\end{proof}

\section{Discretization and Proof of the Main Theorem}
\label{sec:proof-main}

For a compact convex set $B\subset\R^d$, define its width in direction
$u\in S^{d-1}$ by
\[
  w_B(u):=h_B(u)+h_B(-u).
\]
Since two supporting points in the directions $u$ and $-u$ are at
distance at most $\diam(B)$,
\begin{equation}
  w_B(u)\le\diam(B)
  \qquad(u\in S^{d-1}).
  \label{eq:width-below-diameter}
\end{equation}
For $\Omega\subset S^{d-1}$, write
$-\Omega:=\{-u:u\in\Omega\}$.

\begin{lemma}[A common geodesic ball of supporting directions]
\label{lem:common-directions}
Let $B\subset\R^d$ be compact and convex with
$\diam(B)\le\Delta$, and let $z,w\in B$ with $r:=|z-w|>0$.  Put
$v:=(z-w)/r$.  Then, for every $u\in S^{d-1}$,
\begin{equation}
\begin{aligned}
 &[h_B(u)-z\cdot u]+[h_B(-u)+w\cdot u]\\
 &\qquad
 =w_B(u)-r\cos\angle(u,v)
 \le\Delta-r\cos\angle(u,v).
\end{aligned}
\label{eq:deficit-width-inequality}
\end{equation}
In particular, if $\Delta-r\le\kappa$, $0<\rho\le1$, and
\begin{equation}
  \kappa+\frac{\Delta\rho^2}{2}\le\tau,
  \label{eq:common-direction-condition}
\end{equation}
then the geodesic ball of radius $\rho$ about $v$ is contained in
\[
  \Omega_B(z,\tau)\cap\bigl(-\Omega_B(w,\tau)\bigr).
\]
\end{lemma}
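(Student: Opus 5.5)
The identity is a direct algebraic rewrite of the left-hand side, and the inequality then follows from \eqref{eq:width-below-diameter}. The first step is to expand the left-hand side of \eqref{eq:deficit-width-inequality}:
\[
  [h_B(u)-z\cdot u]+[h_B(-u)+w\cdot u]
  =h_B(u)+h_B(-u)-(z-w)\cdot u
  =w_B(u)-r\,v\cdot u .
\]
Since $u,v$ are unit vectors, $v\cdot u=\cos\angle(u,v)$, which gives the equality. The inequality comes from $w_B(u)\le\diam(B)\le\Delta$, which is exactly \eqref{eq:width-below-diameter}.

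For the second assertion, fix $u$ with geodesic distance $\theta:=\angle(u,v)\le\rho$. We must show that $h_B(u)-z\cdot u\le\tau$ and $h_B(-u)-w\cdot(-u)\le\tau$. The second condition says $-u\in\Omega_B(w,\tau)$, i.e. $u\in-\Omega_B(w,\tau)$. Both bracketed terms are nonnegative, since $z,w\in B$ and $h_B$ is the support function. Each term is therefore bounded by their sum, which by the first part is at most $\Delta-r\cos\theta$.

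It remains to bound this quantity. Write
\[
  \Delta-r\cos\theta=(\Delta-r)+r(1-\cos\theta)\le\kappa+r(1-\cos\theta).
\]
Then use the elementary inequality $1-\cos\theta\le\theta^2/2$, valid for all real $\theta$, together with $r\le\diam(B)\le\Delta$. This yields
\[
  \Delta-r\cos\theta\le\kappa+\frac{\Delta\rho^2}{2}\le\tau
\]
by \eqref{eq:common-direction-condition}. Note that $r\le\Delta$ holds automatically because $z,w\in B$. The hypothesis $\rho\le1$ is not really needed beyond ensuring that the geodesic ball is a genuine cap with $\theta\in[0,\rho]\subset[0,\pi]$. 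If the bound is wanted with $\cos\theta$ monotone, this is also where $\rho\le1<\pi$ enters.

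There is no real obstacle. The only point requiring care is to invoke the nonnegativity of each support deficit separately, so that the bound on the sum transfers to each summand. Beyond that, the sign conventions in $-\Omega_B(w,\tau)$ must be checked: $h_B(-u)+w\cdot u=h_B(-u)-w\cdot(-u)$ is precisely the deficit of $w$ in direction $-u$.
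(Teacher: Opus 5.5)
Your proof is correct and follows the paper's argument essentially line for line: the same cancellation giving $w_B(u)-r\cos\angle(u,v)$, the width bound $w_B(u)\le\diam(B)\le\Delta$, nonnegativity of each support deficit separately, and the split $(\Delta-r)+r(1-\cos\theta)$ with $1-\cos\theta\le\theta^2/2$ and $r\le\Delta$. Your side remark is also accurate: the hypothesis $\rho\le1$ plays no role in this lemma itself.
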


\begin{proof}
The identity in \eqref{eq:deficit-width-inequality} follows by direct
cancellation, and the inequality follows from
\eqref{eq:width-below-diameter}.  Both support deficits on the left are
nonnegative.  If $\angle(u,v)\le\rho$, then
\[
  \Delta-r\cos\angle(u,v)
  =\Delta-r+r(1-\cos\angle(u,v))
  \le\kappa+\frac{\Delta\rho^2}{2},
\]
where we used $r\le\diam(B)\le\Delta$ and
$1-\cos t\le t^2/2$.  Under
\eqref{eq:common-direction-condition}, each nonnegative deficit is at
most $\tau$.
\end{proof}

\begin{proof}[Proof of Theorem \ref{thm:weighted}]
If $\mu=0$, the result is trivial.  Both sides of
\eqref{eq:weighted-main-bound} are quadratic in $\mu$, so by homogeneity we
may assume
\[
  \mu(\R^d)=1.
\]

We first dispose of $1/10\le\eps<1$ in every dimension.  Choose
$x_0\in\supp\mu$.  Since
$\supp\mu\subset x_0+\mathbb B_2^d$, cover this ball by
$M_d=O_d(1)$ sets of diameter at most $1/10$, and refine the cover to a
measurable partition of $\supp\mu$.  If $m_1,\ldots,m_{M_d}$ are the
masses of the parts, then
\[
  \cN_\eps(\mu)
  \ge\sum_{j=1}^{M_d}m_j^2
  \ge\frac1{M_d},
  \qquad
  \cA_\eps(\mu)\le1.
\]
Since $\eps^{(d-1)/2}\le1$, the theorem follows in this range after
adjusting $c_d$.

Assume henceforth that $0<\eps<1/10$.  We next prove the one-dimensional
case.  Let
$a:=\min\supp\mu$ and $b:=\max\supp\mu$, and put
\[
  L:=[a,a+\eps],
  \qquad
  R:=[b-\eps,b],
  \qquad
  \ell:=\mu(L),
  \qquad
  r:=\mu(R).
\]
If $x\le y$ and $y-x\ge1-\eps$, then, because $b-a\le1$,
\[
  x\le b-(1-\eps)\le a+\eps,
  \qquad
  y\ge a+(1-\eps)\ge b-\eps.
\]
Thus every near-diameter ordered pair lies in $L\times R$ or
$R\times L$, and
\[
  \cA_\eps(\mu)\le2\ell r.
\]
Every pair in $L\times L$ and every pair in $R\times R$ is an
$\eps$-neighbor, so
\[
  \cN_\eps(\mu)
  \ge\max(\ell^2,r^2)
  \ge\ell r
  \ge\frac12\cA_\eps(\mu).
\]
This proves the theorem for $d=1$.

Now let $d\ge2$.  If $\cA_\eps(\mu)=0$, there is nothing to prove.
Otherwise
\begin{equation}
  D:=\diam(\supp\mu)\in[1-\eps,1].
  \label{eq:diameter-range}
\end{equation}
Set
\[
  \delta:=\frac{\eps}{10\sqrt d},
  \qquad
  r_0:=\frac{\sqrt d\,\delta}{2}=\frac{\eps}{20},
\]
and partition $\R^d$ into the half-open cubes
\[
  C_z:=z+[-\delta/2,\delta/2)^d,
  \qquad z\in\delta\mathbb Z^d.
\]
Let $\Gamma$ be the finite set of centers $z$ for which
$C_z\cap\supp\mu\ne\varnothing$, and put $m_z:=\mu(C_z)$.  Some of these
masses may be zero.  Each cube has diameter
$\sqrt d\,\delta=\eps/10$, so every ordered pair in the same cube is an
$\eps$-neighbor.  Hence
\begin{equation}
  \cN_\eps(\mu)
  \ge\sum_{z\in\Gamma}m_z^2.
  \label{eq:neighbor-cell-bound}
\end{equation}

Set
\[
  B:=\operatorname{conv}(\Gamma),
  \qquad
  \Delta:=D+2r_0=D+\frac{\eps}{10}.
\]
Every center in $\Gamma$ is within $r_0$ of $\supp\mu$.  Hence, for any
$z,w\in\Gamma$, one can choose $x,y\in\supp\mu$ with
$|z-x|,|w-y|\le r_0$, and therefore
\[
  |z-w|\le|x-y|+2r_0\le D+2r_0=\Delta.
\]
Taking a convex hull does not increase diameter, so
\begin{equation}
  \diam(B)\le\Delta.
  \label{eq:convex-hull-diameter}
\end{equation}

Write $z\sim w$ if there are
$x\in\supp\mu\cap C_z$ and $y\in\supp\mu\cap C_w$ with
$|x-y|\ge1-\eps$; sums over $z\sim w$ are over ordered cell pairs.  Every
near-diameter pair belongs to one such cell product, so
\begin{equation}
  \cA_\eps(\mu)
  \le\sum_{z\sim w}m_zm_w.
  \label{eq:antipode-cell-bound}
\end{equation}

Fix $z\sim w$ and witnesses $x,y$.  Then
\[
  r:=|z-w|
  \ge1-\eps-2r_0
  =1-\frac{11}{10}\eps.
\]
Consequently,
\[
  \Delta-r
  \le D-1+\eps+4r_0
  \le\frac65\eps,
  \qquad
  \Delta\le1+\frac{\eps}{10}<\frac{11}{10}.
\]
Apply Lemma \ref{lem:common-directions} with
\[
  \kappa=\frac65\eps,
  \qquad
  \rho=\sqrt\eps,
  \qquad
  \tau=2\eps.
\]
Indeed,
\[
  \kappa+\frac{\Delta\rho^2}{2}
  <\left(\frac65+\frac{11}{20}\right)\eps
  =\frac74\eps
  <2\eps.
\]
Thus, if $v=(z-w)/|z-w|$, the geodesic ball of radius $\sqrt\eps$ about
$v$ lies in
\[
  \Omega_z\cap(-\Omega_w),
  \qquad
  \Omega_z:=\Omega_B(z,2\eps).
\]
Writing $B_{\mathrm{geo}}(v,\rho)$ for the geodesic ball of radius
$\rho$ about $v$, we have, for $0<\rho\le1$,
\[
  \sigma(B_{\mathrm{geo}}(v,\rho))
  =\sigma(S^{d-2})\int_0^\rho\sin^{d-2}t\,dt
  \gtrsim_d\rho^{d-1}.
\]
Taking $\rho=\sqrt\eps$ gives
\begin{equation}
  1
  \le
  C_d\eps^{-(d-1)/2}
  \int_{S^{d-1}}
  \1_{\Omega_z}(u)\1_{\Omega_w}(-u)\,d\sigma(u)
  \qquad(z\sim w).
  \label{eq:entrywise-incidence}
\end{equation}

Define
\[
  F(u):=\sum_{z\in\Gamma}m_z\1_{\Omega_z}(u)
       =\cC_{B,2\eps}m(u).
\]
Summing \eqref{eq:entrywise-incidence} with weights $m_zm_w$, enlarging
the sum from the relation $z\sim w$ to all ordered cell pairs, and using
Cauchy--Schwarz and the invariance of $\sigma$ under $u\mapsto-u$, we
obtain
\begin{align*}
  \cA_\eps(\mu)
  &\le\sum_{z\sim w}m_zm_w\\
  &\le C_d\eps^{-(d-1)/2}
    \int_{S^{d-1}}F(u)F(-u)\,d\sigma(u)\\
  &\le C_d\eps^{-(d-1)/2}\|F\|_2^2.
\end{align*}
Proposition \ref{prop:cap-bound} applies with $K=B$, $\tau=2\eps$, and
\[
  \frac{\tau}{\delta}=20\sqrt d.
\]
Therefore
\[
  \|F\|_2^2
  \le C_d\sum_{z\in\Gamma}m_z^2.
\]
Combining this with \eqref{eq:neighbor-cell-bound} yields
\[
  \cA_\eps(\mu)
  \le C_d\eps^{-(d-1)/2}\cN_\eps(\mu),
\]
which is equivalent to \eqref{eq:weighted-main-bound}.
\end{proof}

\begin{proof}[Proof of Corollary \ref{cor:main}]
Apply Theorem \ref{thm:weighted} to the counting measure
$\mu_P=\sum_{x\in P}\delta_x$.
\end{proof}

\section*{Acknowledgments}

The operator-norm formulation was developed with assistance from GPT-5.6
Pro, which was also used to explore, draft, and check several technical
arguments.  The author takes full responsibility for the final text, all
mathematical claims, and any errors.

\end{document}